\newcommand{\bbz}{\mathbb{Z}}
\newcommand{\bbq}{\mathbb{Q}}
\newcommand{\bbc}{\mathbb{C}}
\newcommand{\GL}{\mathrm{GL}}
\newcommand{\Aut}{\mathrm{Aut}}
\newcommand{\Gal}{\mathrm{Gal}}
\newcommand{\mat}{\begin{pmatrix}}
\newcommand{\emat}{\end{pmatrix}}
\newcommand{\Tr}{\mathrm{Tr}}
\newtheorem{theorem}{Theorem}[section]
\newtheorem{lemma}[theorem]{Lemma}
\newtheorem{rmk}[theorem]{Remark}
\newtheorem{hypothesis}[theorem]{Hypothesis}
\newtheorem*{theorem*}{Theorem}
\newtheorem{conjecture}{Conjecture}
\author{Seokhyun Choi and Bo-Hae Im}
\address{
Dept. of Mathematical Sciences, KAIST,
291 Daehak-ro, Yuseong-gu,
Daejeon 34141, South Korea
}
\email{sh021217@kaist.ac.kr}
\address{
Dept. of Mathematical Sciences, KAIST,
291 Daehak-ro, Yuseong-gu,
Daejeon 34141, South Korea
}
\email{bhim@kaist.ac.kr}
\date{\today}
\subjclass[2020]{Primary 11G05}
\keywords{}
\thanks{Bo-Hae Im was supported by Basic Science Research Program through the National Research Foundation of Korea(NRF) grant funded by the Korea government(MSIT)(NRF-2023R1A2C1002385).}
\begin{document}

\thispagestyle{empty}
\title[Larsen's conjecture for elliptic curves over $\bbq$]
{Larsen's conjecture for elliptic curves over $\bbq$ with analytic rank at most~$1$}

\begin{abstract} 
    We prove Larsen's conjecture for elliptic curves over $\bbq$ with analytic rank at most~$1$. Specifically, let $E/\bbq$  be an elliptic curve over $\bbq$. If $E/\bbq$ has analytic rank at most~$1$, then we prove that for any topologically finitely generated subgroup $G$ of $\Gal(\overline{\bbq}/\bbq)$, the rank of $E$ over the fixed subfield $\overline{\bbq}^G$ of $\overline{\bbq}$ under $G$ is infinite.
\end{abstract}

\maketitle

\section{Introduction}

Elliptic curves are fundamental objects in number theory and algebraic geometry, playing a central role in many deep conjectures and theorems. In number theory, a key focus is on elliptic curves defined over number fields, where their arithmetic properties are of great interest. One of the foundational results in this setting is the Mordell-Weil theorem, which provides crucial structural information about the set of rational points on an elliptic curve.

Let $E/K$ be an elliptic curve over a number field $K$. The Mordell-Weil theorem says that the group of $K$-rational points $E(K)$ forms a finitely generated abelian group. Consequently, we can write 
\[ E(K) \cong E(K)_{\text{tors}}\otimes \bbz^r,\]
where  $r$ is a non-negative integer called the rank of $E$ over $K$, and  $E(K)_{\text{tors}}$ is the torsion subgroup, which is finite. The rank  plays a fundamental role in the arithmetic of elliptic curves. In particular, the Birch and Swinnerton-Dyer conjecture predicts a deep connection between the rank and the behavior of the Hasse–Weil $L$-function of
$E$ at $s=1$. Although the rank is an essential invariant, determining its exact value for a given elliptic curve is often a difficult problem. While there are effective methods to compute ranks in certain cases, many aspects remain mysterious, including whether there exists a uniform bound on ranks of elliptic curves over a fixed number field.

In contrast to the Mordell-Weil theorem, the structure of $E$ over an algebraic closure of $K$ behaves quite differently. Frey and Jarden \cite{FJ74} proved that the rank of $E$ over an algebraically closure $\overline{K}$ of $K$ is infinite, i.e.,
\[\dim E(\overline{K}) \otimes \bbq = \infty.\]
Furthermore, they proved a stronger result: for almost all $(\sigma_1,\ldots,\sigma_n) \in G_K^n$ in the sense of Haar measure, the rank of $E$ over the fixed subfield $\overline{K}^{\langle\sigma_1,\ldots,\sigma_n \rangle}$ of $\overline{K}$ under the group $\langle \sigma_1,\ldots,\sigma_n \rangle$, is infinite. 

In this vein, M. Larsen made the following conjecture:

\begin{conjecture}\label{conj:Larsen}{\normalfont (\cite{La03})}
    Let $E/K$ be an elliptic curve over a number field $K$. Let $G$ be a topologically finitely generated subgroup of $G_K:=\Gal(\overline{K}/K)$. Then the rank of $E$ over the fixed subfield $\overline{K}^G$ of $\overline{K}$ under $G$ is infinite.
\end{conjecture}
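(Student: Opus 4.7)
The plan is to produce infinitely many $\bbz$-linearly independent points of $E$ inside $\overline{\bbq}^G$ via quadratic twists of $E$.

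First, I would show that $\overline{\bbq}^G$ contains infinitely many quadratic subfields of $\bbq$. Let $\bbq^{(2)} = \bbq(\sqrt{d} : d \in \bbz)$ be the maximal elementary abelian $2$-extension of $\bbq$, so that $\Gal(\bbq^{(2)}/\bbq)$ is topologically isomorphic to $\prod_p \bbz/2\bbz$. Any closed topologically finitely generated subgroup of an exponent-$2$ profinite abelian group is already finite: $k$ elements generate an algebraic subgroup of order at most $2^k$, which is automatically closed. Hence the image of $G$ in $\Gal(\bbq^{(2)}/\bbq)$ is finite, and therefore
\[ H := \{d \in \bbq^\times/\bbq^{\times 2} : \bbq(\sqrt{d}) \subset \overline{\bbq}^G\} \]
is a subgroup of $\bbq^\times/\bbq^{\times 2}$ of finite index dividing $2^k$, where $k$ is the number of topological generators of $G$. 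Concretely, $H$ is cut out by finitely many quadratic characters of $G_\bbq$, i.e., by sign and congruence conditions on the squarefree representative of $d$.

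Next, I would produce many quadratic twists of positive algebraic rank. Since the analytic rank of $E$ is at most $1$, non-vanishing theorems of Waldspurger, Murty--Murty, Bump--Friedberg--Hoffstein, Perelli--Pomykala, Iwaniec and others furnish infinitely many squarefree $d$ for which $E^d$ has analytic rank exactly $1$. Applying Gross--Zagier together with Kolyvagin's Euler system to $E^d$ then gives $\rank E^d(\bbq) = 1$. Identifying $E^d(\bbq) \otimes \bbq$ with the $(-1)$-eigenspace for $\Gal(\bbq(\sqrt{d})/\bbq)$ in $E(\bbq(\sqrt{d})) \otimes \bbq$, this supplies an infinite-order point $Q_d \in E(\bbq(\sqrt{d}))$ with $\tau_d Q_d = -Q_d$ for the nontrivial $\tau_d$.

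The crux of the argument, and the main obstacle, is combining these two inputs: one needs infinitely many $d$ lying simultaneously in $H$ and in the analytic non-vanishing family. Membership in $H$ being a finite set of local conditions, this reduces to showing that the density estimate for $\#\{d : |d| \leq X,\ \text{analytic rank of } E^d = 1\}$ survives when $d$ is restricted to an arithmetic progression with prescribed local behavior at finitely many primes. Such uniformity ought to follow from the mollifier and Shimura-correspondence techniques underlying those theorems, but tracking it carefully, and in particular ruling out pathological cancellation in the relevant twisted sums, is where I expect the bulk of the technical work to lie.

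Finally, given an infinite family $\{d_i\} \subset H$ with corresponding anti-invariant points $Q_i$, I would argue independence as follows. After passing to the Galois closure $L = \bbq(\sqrt{d_{i_1}}, \ldots, \sqrt{d_{i_m}})$ of any finite subfamily (and noting that $H$, being infinite, contains a $\bbz/2\bbz$-linearly independent infinite subset), any relation $\sum n_j Q_{i_j} \in E(\overline{\bbq})_{\text{tors}}$ can be killed term by term by applying the projector $\tfrac12(1 - \tau_{d_{i_j}})$: since the commuting involutions $\tau_{d_{i_j}}$ act by distinct quadratic characters on the $Q_{i_\ell}$, this forces each $n_j Q_{i_j}$ to be torsion, hence $n_j = 0$. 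Therefore $\rank E(\overline{\bbq}^G) = \infty$, as desired.
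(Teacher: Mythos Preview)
Your approach---producing rank via quadratic twists $E^d$ with $\sqrt{d}\in\overline{\bbq}^G$---is genuinely different from the paper's, which works instead with Heegner points of growing ring-class conductor over a single auxiliary imaginary quadratic field and traces them down into $\overline{\bbq}^G$. However, there is a real gap at precisely the step you yourself flag as the crux.

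The assertion that $H$ is ``cut out by sign and congruence conditions on the squarefree representative of $d$'' is false. The defining conditions for $H$ are $\chi_d(\sigma_i)=1$ for the chosen generators $\sigma_i\in G_\bbq$; under Kummer duality these are $k$ arbitrary $\bbz/2\bbz$-linear functionals on $\bbq^\times/\bbq^{\times2}$, and a general such functional is \emph{not} determined by the behaviour of $d$ at finitely many places. For instance, if $\sigma$ projects to $(1,1,1,\ldots)$ in $\Gal(\bbq^{(2)}/\bbq)\cong\prod_v\bbz/2\bbz$, then for squarefree $d>0$ one has $\chi_d(\sigma)=1$ if and only if $d$ has an \emph{even number of prime factors}---which is not a congruence condition on $d$. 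Hence your reduction to ``non-vanishing of $L'(E^d,1)$ along an arithmetic progression'' does not go through, and the mollifier and Shimura-correspondence techniques you cite do not address constraints of this shape. This obstruction is exactly why the quadratic-twist argument of Dokchitser--Dokchitser (cited in the paper) remains conditional: root numbers \emph{are} computed locally, so one can force $w(E^d)=-1$ for infinitely many $d\in H$, but converting odd sign into positive algebraic rank requires the parity conjecture or finiteness of the Tate--Shafarevich group; making it unconditional via Gross--Zagier--Kolyvagin would demand precisely the non-vanishing of $L'$ over a general finite-index subgroup $H\le\bbq^\times/\bbq^{\times2}$, which current technology does not supply. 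The paper sidesteps the issue entirely by building points inside a tower of ring class fields whose Galois group over the Hilbert class field is an explicit product of cyclic groups, so that the image of $G$ can be bounded by elementary group theory rather than by analytic non-vanishing under exotic constraints.
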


In a series of papers \cite{La03}, \cite{Im06}, \cite{Im06-2}, \cite{IL08}, the second author and Larsen used a geometric approach to prove Conjecture~\ref{conj:Larsen} when $G$ is procyclic, meaning it is topologically generated by a single element. Further, in \cite{Im07} and \cite{BI08}, Breuer and the second author applied the Heegner point method to prove Conjecture~\ref{conj:Larsen} in the case when $K$ is a totally real number field and $G$ is procyclic. In 2009, Tim and Vladimir Dokchitser \cite{DD09} proved Conjecture~\ref{conj:Larsen} when $K=\bbq$ under the assumption of either the rank part of Birch and Swinnerton-Dyer conjecture or the finiteness conjecture of the Tate-Shafarevich groups. In 2013, the second author and Larsen \cite{IL13} used combinatorial methods to prove Conjecture~\ref{conj:Larsen} when all $2$-torsion points of $E/K$ are $K$-rational. One can found more information about Larsen's conjecture in \cite{IL19}.

This paper is motivated by Conjecture~\ref{conj:Larsen}, and  we prove Conjecture~\ref{conj:Larsen} in the case $K=\bbq$ under the assumption that the given elliptic curve $E/\bbq$ has analytic rank at most~$1$: 

\begin{theorem}\label{main_theorem}
    Let $E/\bbq$ be an elliptic curve over $\bbq$. Suppose the analytic rank of $E/\bbq$ is at most $1$. Let $G$ be a topologically finitely generated subgroup of $G_\bbq := \Gal(\overline{\bbq}/\bbq)$. Then the rank of $E$ over $\overline{\bbq}^G$ is infinite.
\end{theorem}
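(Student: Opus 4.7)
The plan is to produce infinitely many linearly independent points of infinite order in $E(\overline{\bbq}^G)$ coming from quadratic twists of $E$. Write $G = \overline{\langle \sigma_1,\ldots,\sigma_n\rangle}$. Since the image of $G$ in $\Gal(\bbq^{(2)}/\bbq)$, where $\bbq^{(2)}:=\bbq(\sqrt{d}:d\in\bbq^\times)$ is the multi-quadratic closure of $\bbq$, is a topologically $n$-generated subgroup of an elementary abelian pro-$2$ group, it has order at most $2^n$. Dually,
\[
\mathcal{D} := \{\, d\in \bbq^\times/(\bbq^\times)^2 : \sqrt{d}\in \overline{\bbq}^G\,\}
\]
is a subgroup of $\bbq^\times/(\bbq^\times)^2$ of index at most $2^n$, so $\overline{\bbq}^G$ contains infinitely many quadratic extensions of $\bbq$. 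The strategy is then to find infinitely many $d\in\mathcal{D}$ such that $E^d$ has Mordell--Weil rank one, and to combine the resulting rational points on the twists into an infinite-rank subgroup of $E(\overline{\bbq}^G)$.

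For each $d\in\mathcal{D}$ that additionally satisfies (i) the root-number condition $w(E^d)=-1$ and (ii) the analytic nonvanishing $L'(E^d,1)\neq 0$, the twist $E^d$ has analytic rank exactly one, so the Gross--Zagier formula together with Kolyvagin's theorem yields $\mathrm{rank}\,E^d(\bbq)=1$ (and finiteness of $\mathrm{Sha}(E^d/\bbq)$) with a generator $P_d\in E^d(\bbq)$ modulo torsion coming from a Heegner point. The inclusion $E^d(\bbq)\otimes\bbq\hookrightarrow E(\bbq(\sqrt{d}))\otimes\bbq$ into the $-1$-eigenspace for $\Gal(\bbq(\sqrt{d})/\bbq)$ realizes $P_d$ as a nontorsion point of $E(\bbq(\sqrt{d}))\subset E(\overline{\bbq}^G)$. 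Given $d_1,\ldots,d_m\in\mathcal{D}$ that are pairwise inequivalent modulo squares, the compositum $L_m:=\bbq(\sqrt{d_1},\ldots,\sqrt{d_m})$ is $(\bbz/2\bbz)^m$-Galois over $\bbq$ with pairwise distinct quadratic characters $\chi_{d_i}$, and $P_{d_i}$ lies in the $\chi_{d_i}$-isotypic component of $E(L_m)\otimes\bbq$; these components being pairwise disjoint, the $P_{d_i}$ are $\bbq$-linearly independent in $E(\overline{\bbq}^G)\otimes\bbq$. The hypothesis that $E/\bbq$ has analytic rank at most $1$ enters through the compatibility of the Heegner-point construction with Kolyvagin's machinery for $E$ itself, and through existing unconditional nonvanishing results for the family of quadratic twists, whose applicability depends on the sign $w(E)$ that we control in the analytic rank $\le 1$ regime.

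The expected main obstacle is the joint realization of $d\in\mathcal{D}$, $w(E^d)=-1$, and $L'(E^d,1)\neq 0$ for infinitely many $d$. The root-number condition is a congruence on $d$ modulo $4N$, where $N$ is the conductor of $E$, and the analytic nonvanishing condition holds for a positive density of $d$ in any compatible arithmetic progression by the theorems of Bump--Friedberg--Hoffstein, Iwaniec, Murty--Murty and related nonvanishing results for quadratic twists of modular $L$-functions. The difficulty is that membership in $\mathcal{D}$ is \emph{not} a priori a congruence condition: a single $\sigma_j\in G$ may act nontrivially on $\sqrt{p}$ for infinitely many primes $p$ in a way uncontrolled by the residue class of $p$. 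To overcome this, the plan is to introduce the type map
\[
\tau:\{\text{primes}\}\to \{\pm 1\}^n,\qquad \tau(p)_j=\sigma_j(\sqrt{p})/\sqrt{p},
\]
which has finite image; by pigeonhole some fibre $\tau^{-1}(t_0)$ is infinite, and taking $d=p_1 p_2$ with $p_1,p_2\in\tau^{-1}(t_0)$ forces $d\in\mathcal{D}$ automatically (the type being multiplicative with $t_0\cdot t_0=1$). The residual freedom of choosing $p_1,p_2$ inside $\tau^{-1}(t_0)$ in suitable residue classes modulo $4N$ is then used to impose the root-number condition, and feeding the resulting family into the analytic nonvanishing theorems (with a bookkeeping to ensure positive density survives the combinatorial restriction) yields the required infinite family of $d$, completing the proof.
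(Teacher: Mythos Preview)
Your approach via quadratic twists is genuinely different from the paper's, and the obstacle you flag at the end is not a matter of bookkeeping but the actual heart of the problem---one that your outline does not resolve.

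The known nonvanishing theorems (Bump--Friedberg--Hoffstein, Murty--Murty, Iwaniec, Perelli--Pomyka\l{}a, Ono--Skinner, \dots) produce infinitely many, or a positive proportion of, fundamental discriminants $d$ with $L'(E^d,1)\neq 0$ \emph{inside families defined by congruence conditions} on $d$. Your family is not of this type: the fibre $\tau^{-1}(t_0)$ is an arbitrary infinite set of primes determined by the chosen automorphisms $\sigma_1,\dots,\sigma_n\in G_\bbq$, and there is no reason whatsoever for it to be cut out by congruences (indeed one can arrange $\sigma_1$ so that $\tau^{-1}(t_0)$ is any prescribed infinite set of primes). Consequently the set $\{p_1p_2:p_1,p_2\in\tau^{-1}(t_0)\}$ need not contain any arithmetic progression, and even positive \emph{lower} density of the nonvanishing set among all discriminants would not force it to meet your set. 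No ``positive density survives the combinatorial restriction'' argument is available here: two positive-density sets can be disjoint. This is precisely the obstruction that Dokchitser--Dokchitser \cite{DD09} bypass by assuming the parity conjecture (so that the congruence-controlled root number alone suffices), and that the present paper bypasses by an entirely different mechanism.

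The paper's proof uses a \emph{single} imaginary quadratic field $K$ with $L'(E/K,1)\neq 0$ (this is the only place the analytic-rank hypothesis and BFH are invoked), and then works inside the tower of ring class fields $H_{p_1\cdots p_n}$ for an infinite sequence of primes $p_i$ chosen via Chebotarev so that $p_i$ is inert in $K$, $p_i\equiv -1\pmod q$ for a fixed auxiliary prime $q$, and $q\nmid a_{p_i}$. The Galois group $\Gal(H_{p_1\cdots p_n}/H)\cong\prod C_{p_i+1}$ has a quotient $C_q^n$; since $G$ is topologically $r$-generated, its image there has index $\ge q^{\,n-r}$, giving large subfields fixed by $G$. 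One then traces the Heegner points $P_{p_1\cdots p_n}$ down to these fixed fields and uses the Euler-system relation $\Tr_{H_{p_1\cdots p_n}/H}(P_{p_1\cdots p_n})=a_{p_1}\cdots a_{p_n}P_H$ together with $q\nmid a_{p_i}$ to show, by a $q$-adic valuation argument, that the resulting $G$-fixed points cannot all live in a fixed finitely generated group. A separate short argument handles the case where $G$ does not fix $K$. The point is that \emph{one} nonvanishing suffices because the growth comes from the ring-class-field tower, not from producing new twists with analytic rank one.
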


\begin{rmk} 
   \
   
    \begin{enumerate}[\normalfont (a)]
        \item In 1979, Goldfeld conjectured that a family of quadratic twists  $\{E_D\}$ of an elliptic curve $E/\bbq$ over $\bbq$, $50\%$  of them have analytic rank~$0$, $50\%$  have analytic rank~$1$, and $0\%$  have analytic rank greater than or equal to~$2$. Our result, therefore, implies that under Goldfeld's conjecture, almost all quadratic twists of an elliptic curve over $\bbq$, in a probabilistic sense, satisfy Larsen's conjecture,~Conjecture~\ref{conj:Larsen}.
        \item By the results of Gross-Zagier and Kolyvagin (\cite[Theorem~3.22]{Da04}), if $E/\bbq$ is an elliptic curve over $\bbq$ with analytic rank at most~$ 1$, then the analytic rank coincides with the algebraic rank. Consequently, the assumption on the analytic rank in Theorem~\ref{main_theorem} implies that the algebraic rank is at most~$1$.
        \item The results of \cite{Im07} for $K=\bbq$ were later generalized in  \cite{BI08} to totally real number fields $K$, replacing modular curves by with Shimura varieties. In a similar vein, we expect that our results may also extend to totally real number fields $K$ by the same method, providing a broader framework for Conjecture~\ref{conj:Larsen}.
    \end{enumerate}
\end{rmk}

The main strategy of our proof is to generalize the Heegner point method from \cite{Im07}, though in a different way from the approach taken by the second author. Roughly speaking, we construct an infinite sequence of primes $\{p_n\}$ satisfying certain properties, and consider the Heegner points in the ring class fields of conductor $p_1\cdots p_n$. We then show that the traces of these Heegner points are non-torsion under the assumption that the analytic rank is at most 1, and that they generate an infinitely generated group, thereby establishing the theorem. 

In Section~\ref{preliminary_lemmas}, we give some preliminary lemmas which are needed in proving main theorems. In Section~\ref{construction_primes}, we will construct an infinite sequence of primes $\{p_n\}$ which is need in the proof of Theorem~\ref{main_theorem}. Finally, in Section~\ref{proof}, we will prove Theorem~\ref{main_theorem}.

\section{Preliminary lemmas}\label{preliminary_lemmas}

We first prove a lemma describing the Galois group structure of ring class fields of an imaginary quadratic field. We denote a cyclic group of order $n$ by $C_n$ and if $d\mid n$ for a positive integer $d$, then $C_n$ has a unique subgroup of order $d$, so we consider $C_d$ as a subgroup of $C_n$.

\begin{lemma}\label{Galois_group_structure} Let $K$ be an imaginary quadratic field. Let $H$ be the Hilbert class field of $K$ and let $H_c$ be the ring class field of $K$ with conductor $c$. Let $\{p_n\}$ be a sequence of rational primes which are inert in $K$. Then, we have the following:
    \begin{enumerate}[\normalfont (a)]
    \item  For $i\neq j$, $H_{p_i}$ and $H_{p_j}$ are linearly disjoint over $H$.
    \item For $n \geq 1$, $H_{p_1 \cdots p_n} = H_{p_1} \cdots H_{p_n}$.
    \item For $n \geq 1$, we have 
    \begin{align*}
        \Gal(H_{p_1\cdots p_n}/H) &\cong \Gal(H_{p_1}/H) \times \cdots \times \Gal(H_{p_n}/H) \\ 
        &\cong C_{p_1+1} \times \cdots \times C_{p_n+1}.
        %&\cong \bbz/(p_1+1)\bbz \times \cdots \times \bbz/(p_n+1)\bbz.
    \end{align*}
    \end{enumerate}
\end{lemma}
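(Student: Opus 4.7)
The plan is to derive (a), (b), and (c) simultaneously by computing $\Gal(H_{p_1\cdots p_n}/H)$ explicitly from the standard class-field-theoretic description of ring class fields. Specifically, I would invoke the exact sequence
\[
1 \to \mathcal{O}_K^*/\mathcal{O}_c^* \to (\mathcal{O}_K/c\mathcal{O}_K)^*/(\bbz/c\bbz)^* \to \Gal(H_c/H) \to 1,
\]
valid for any conductor $c \geq 1$ (where $\mathcal{O}_c := \bbz + c\mathcal{O}_K$). Since $\mathcal{O}_c^* = \{\pm 1\}$ whenever $c > 1$, the leftmost term vanishes provided $\mathcal{O}_K^* = \{\pm 1\}$, giving
\[
\Gal(H_c/H) \cong (\mathcal{O}_K/c)^*/(\bbz/c)^*.
\]
The two exceptional imaginary quadratic fields $\bbq(i)$ and $\bbq(\sqrt{-3})$ can either be excluded by the choice of Heegner field in the intended application or be handled separately with a small modification.

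Next, set $c = p_1 \cdots p_n$. Because the $p_i$ are pairwise distinct primes, the Chinese remainder theorem gives compatible ring decompositions $\mathcal{O}_K/c \cong \prod_i \mathcal{O}_K/p_i$ and $\bbz/c \cong \prod_i \bbz/p_i$, whence
\[
\frac{(\mathcal{O}_K/c)^*}{(\bbz/c)^*} \cong \prod_{i=1}^n \frac{(\mathcal{O}_K/p_i)^*}{(\bbz/p_i)^*}.
\]
Since each $p_i$ is inert in $K$, the reduction $\mathcal{O}_K/p_i$ is the field $\F{p_i^2}$, so the $i$-th factor equals $\F{p_i^2}^*/\F{p_i}^*$, which is cyclic of order $(p_i^2-1)/(p_i-1) = p_i+1$. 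This already yields (c): specializing to $c = p_i$ alone identifies the $i$-th factor with $\Gal(H_{p_i}/H) \cong C_{p_i+1}$, and the natural restriction $\Gal(H_{p_1\cdots p_n}/H) \to \Gal(H_{p_i}/H)$ is the projection onto that factor.

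Parts (a) and (b) then fall out by Galois correspondence. Under (c), $H_{p_i}$ is the fixed field of $\prod_{j \ne i} C_{p_j+1}$, so for $i \ne j$ the subgroups fixing $H_{p_i}$ and $H_{p_j}$ together generate the whole product, forcing $H_{p_i} \cap H_{p_j} = H$, which is (a). For (b), iterating (a) gives $[H_{p_1}\cdots H_{p_n} : H] = \prod_i (p_i+1)$, matching $[H_{p_1\cdots p_n} : H]$ from (c); combined with the tautological inclusion $H_{p_1}\cdots H_{p_n} \subseteq H_{p_1\cdots p_n}$, equality follows. The only subtle point in the whole argument is verifying that the unit term in the exact sequence is trivial; once that is disposed of, the rest is a routine CRT computation plus the elementary structure of quadratic residue field extensions, so I do not anticipate any genuine obstacle.
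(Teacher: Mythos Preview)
Your proposal is correct and follows essentially the same route as the paper: both rely on the identification $\Gal(H_c/H) \cong (\mathcal{O}_K/c)^*/(\bbz/c)^*$ and a Chinese remainder decomposition to obtain the product $C_{p_1+1}\times\cdots\times C_{p_n+1}$. The only cosmetic difference is that the paper proves (a) first via a ramification argument ($H_{p_i}\cap H_{p_j}$ is unramified over $K$, hence equals $H$) and then deduces (b) and (c), whereas you establish (c) directly and read off (a) and (b) from the Galois correspondence; both orderings are fine.
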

\begin{proof}
For (a), we note that $H_{p_i} \cap H_{p_j} = H$, since all primes of $K$ are unramified in $H_{p_i} \cap H_{p_j}$. Thus (a) follows.

For (b) and (c), we first note that \cite[Section~9]{Co22}
    \[\Gal(H_c/H) \cong (\mathcal{O}_K/c\mathcal{O}_K)^*/ (\bbz/c\bbz)^*.\]
    Therefore, we have
    \begin{align*}
    \Gal(H_{p_i}/H) \cong \frac{(\mathcal{O}_K/p_i\mathcal{O}_K)^*}{(\bbz/p_i\bbz)^*} \cong C_{p_i^2-1}/C_{p_i-1} \cong C_{p_i+1}
    \end{align*}
    and 
    \begin{align*}
        \Gal(H_{p_1\cdots p_n}/H) &\cong \frac{(\mathcal{O}_K/p_1\cdots p_n\mathcal{O}_K)^*}{(\bbz/p_1\cdots p_n\bbz)^*} \\
        &\cong \frac{(\mathcal{O}_K/p_1\mathcal{O}_K)^* \times \cdots \times (\mathcal{O}_K/p_n\mathcal{O}_K)^*}{(\bbz/p_1\bbz)^* \times \cdots \times (\bbz/p_n\bbz)^*} \\
        &\cong \frac{(\mathcal{O}_K/p_1\mathcal{O}_K)^*}{(\bbz/p_1\bbz)^*} \times \cdots \times \frac{(\mathcal{O}_K/p_n\mathcal{O}_K)^*}{(\bbz/p_n\bbz)^*} \\
        &\cong C_{p_1^2-1}/C_{p_1-1}\times \cdots C_{p_n^2-1}/C_{p_n-1}\\
        %&\cong \frac{\bbz/(p_1^2-1)\bbz}{\bbz/(p_1-1)\bbz} \times \cdots \times \frac{\bbz/(p_n^2-1)\bbz}{\bbz/(p_n-1)\bbz} \bh{ not a subgroup? why?}\\
        &\cong  C_{p_1+1} \times \cdots \times C_{p_n+1}.
       % &\cong \bbz/(p_1+1)\bbz \times \cdots \times \bbz/(p_n+1)\bbz.
    \end{align*}
Since $H_{p_1},\ldots,H_{p_n}$ are linearly disjoint over $H$ by (a), we complete the proof of (b)~and~(c).
\end{proof}

Next, we list some properties of Heegner points. Let $E/\bbq$ be an elliptic curve over $\bbq$ of conductor $N$ and let $K$ be an imaginary quadratic field such that the pair $(E,K)$ satisfies the Heegner hypothesis. Recall that the Heegner hypothesis is 
\begin{hypothesis}\cite[Hypothesis~3.9]{Da04}\label{Heegner_hypothesis}
    All primes dividing $N$ split in $K/\bbq$.
\end{hypothesis}

The Heegner system attached to $(E,K)$ is a collection of Heegner points $\{P_n\}_{(n,N)=1}$ indexed by integers $n$ prime to $N$, and satisfying certain conditions. See \cite[Definition~3.12]{Da04} for the detail.

Let $\{P_n\}_{(n,N)=1}$ be a fixed nontrivial Heegner system attached to $(E,K)$, which exists by \cite[Theorem~3.13]{Da04}.

\begin{lemma}\cite[Proposition~3.10]{Da04}\label{heegner_system_trace}
    Let $n$ be a positive integer and let $\ell$ be a prime such that $(n\ell, N)=1$. Suppose $\ell \nmid n$ and that $\ell$ is inert in $K$. Then, we have  
    \[\Tr_{H_{n\ell}/H_n}(P_{n\ell}) = a_\ell P_n.\]
\end{lemma}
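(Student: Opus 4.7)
The plan is to pass through the modular parametrization $\phi : X_0(N) \to E$. Heegner points $P_n$ are defined as images under $\phi$ of CM points $x_n = (A, C) \in X_0(N)$, where $A$ is a complex elliptic curve with $\End(A) \cong \mathcal{O}_n := \bbz + n\mathcal{O}_K$ and $C \subset A$ is a cyclic subgroup of order $N$ that is stable under the CM action (whose existence uses the Heegner hypothesis, since $N$ splits in $K$). The identity we want is then an identity of divisors on $X_0(N)$ pushed forward to $E$.

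First, I would express $T_\ell(x_n)$ geometrically as $\sum_D (A/D,\, (C+D)/D)$, where $D$ runs over the $\ell+1$ cyclic subgroups of order $\ell$ in $A[\ell]$. Next I would analyze the CM structure of each $A/D$. Since $\ell \nmid n$, we have $\mathcal{O}_n \otimes \bbz_\ell = \mathcal{O}_K \otimes \bbz_\ell$, so $A[\ell]$ is free of rank one over $\mathcal{O}_K / \ell\mathcal{O}_K$. Because $\ell$ is inert, $\mathcal{O}_K/\ell\mathcal{O}_K \cong \F{\ell^2}$ is a field, so $A[\ell]$ is simple as an $\mathcal{O}_K$-module. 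Consequently none of the $\ell+1$ $\F{\ell}$-lines $D$ is stable under $\mathcal{O}_K$, which forces $\End(A/D)$ to be precisely the unique order of index $\ell$ inside $\mathcal{O}_n$, namely $\mathcal{O}_{n\ell}$. Each quotient pair $(A/D, (C+D)/D)$ is therefore a Heegner point of conductor $n\ell$ on $X_0(N)$.

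Next I would match the $\ell+1$ pairs with a $\Gal(H_{n\ell}/H_n)$-orbit. From Lemma~\ref{Galois_group_structure}(c) applied with a single prime (and more generally from the standard conductor formula $[H_{n\ell}:H_n] = \ell - \chi_K(\ell) = \ell + 1$ when $\ell$ is inert and coprime to $n$), the Galois group $\Gal(H_{n\ell}/H_n)$ has order $\ell+1$. By Shimura reciprocity, the Picard group $\mathrm{Pic}(\mathcal{O}_{n\ell})$ acts on Heegner points of conductor $n\ell$, and the preimages of $x_n$ under the natural degeneracy map $X_0(N)_{\mathcal{O}_{n\ell}} \to X_0(N)_{\mathcal{O}_n}$ form a single orbit under the kernel of $\mathrm{Pic}(\mathcal{O}_{n\ell}) \to \mathrm{Pic}(\mathcal{O}_n)$. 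Hence $\sum_D (A/D, (C+D)/D)$ is exactly the sum of $\mathrm{Gal}(H_{n\ell}/H_n)$-conjugates of $x_{n\ell}$.

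Finally, applying $\phi$ and using that the newform attached to $E$ is a $T_\ell$-eigenform with eigenvalue $a_\ell$, so that $\phi \circ T_\ell = a_\ell \cdot \phi$ on divisors, I get
\[
a_\ell P_n \;=\; \phi(T_\ell x_n) \;=\; \sum_D \phi\bigl((A/D,(C+D)/D)\bigr) \;=\; \Tr_{H_{n\ell}/H_n}(P_{n\ell}),
\]
which is the desired formula. The main obstacle is the Galois-theoretic step: verifying that the $\ell+1$ geometric points appearing in $T_\ell x_n$ are precisely the Galois conjugates of a single Heegner point of conductor $n\ell$, rather than, say, a mixture including points of smaller conductor. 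The inertness of $\ell$ in $K$ (ruling out the one "distinguished" subgroup coming from a prime above $\ell$) and the coprimality $\ell \nmid n$ (ensuring $A[\ell]$ sees the full $\mathcal{O}_K/\ell$-structure) are exactly what make this work.
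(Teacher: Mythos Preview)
The paper does not supply its own proof of this lemma; it is stated with a citation to \cite[Proposition~3.10]{Da04} and used as a black box. Your sketch is the standard argument (Hecke correspondence on $X_0(N)$, CM analysis of $A[\ell]$ when $\ell$ is inert, Shimura reciprocity to identify the $\ell+1$ quotients with a full $\Gal(H_{n\ell}/H_n)$-orbit, and the eigenvalue relation $\phi\circ T_\ell = a_\ell\,\phi$), which is exactly the line of reasoning in the cited reference, so there is nothing to compare.
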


\begin{lemma}\cite[Lemma~3.14]{Da04}\label{torsion_finiteness}
    Let $H_\infty$ be the union of all ring class fields $H_c$, where $(c,N)=1$. Then, the torsion subgroup of $E(H_\infty)$ is finite.
\end{lemma}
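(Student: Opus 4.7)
The approach is Galois-theoretic. Since each ring class field $H_c/K$ is abelian, the union $H_\infty$ is contained in the maximal abelian extension $K^{\mathrm{ab}}$ of $K$. Equivalently, the subgroup $\mathrm{Gal}(\overline{\bbq}/H_\infty)$ of $G_K := \mathrm{Gal}(\overline{\bbq}/K)$ contains the closed commutator subgroup $[G_K, G_K]$. The plan is to show that in each $p$-adic Tate module representation $\rho_p : G_K \to \mathrm{GL}_2(\bbz_p)$ attached to $E$, the image of $[G_K, G_K]$ is large enough to fix only finitely many $p$-power torsion points, and that this fixed torsion vanishes for all but finitely many $p$.

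Assume first that $E$ does not have complex multiplication by an order in $K$. Serre's open image theorem gives that $\rho_p(G_K)$ is an open subgroup of $\mathrm{GL}_2(\bbz_p)$ for every prime $p$, and in fact contains $\mathrm{SL}_2(\bbz_p)$ for all but finitely many $p$. For these ``good'' primes $p$, $\rho_p([G_K, G_K])$ contains $\mathrm{SL}_2(\bbz_p)$, which acts transitively on $E[p] \setminus \{0\}$ (for $p \geq 3$), hence forces $E(H_\infty)[p] = 0$. For each of the remaining finitely many primes $p$, $\rho_p([G_K, G_K])$ is still open in $\mathrm{SL}_2(\bbz_p)$ and acts irreducibly on $V_p(E) := T_p(E) \otimes_{\bbz_p} \bbq_p$; a standard cohomological argument (noting $V_p(E)^U = 0$ for any open $U \subset \mathrm{SL}_2(\bbz_p)$) yields finiteness of $E(H_\infty)[p^\infty]$ for each such $p$. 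Summing the bounds then gives $E(H_\infty)_{\mathrm{tors}}$ finite in the non-CM case.

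The main obstacle is the CM case, in which $E$ has complex multiplication by an order of $K$. Here $\rho_p(G_K)$ is essentially abelian (landing in the normalizer of a Cartan subgroup of $\mathrm{GL}_2(\bbz_p)$), so the commutator bound degenerates and $E(K^{\mathrm{ab}})_{\mathrm{tors}}$ is genuinely infinite. One would instead exploit that $H_\infty$ is strictly smaller than $K^{\mathrm{ab}}$: by the main theorem of complex multiplication, $K(E[n])$ is closely related to the ray class field of $K$ of conductor $n$, whereas the ring class field $H_c$ corresponds via class field theory to the class group of the non-maximal order $\mathcal{O}_c := \bbz + c\mathcal{O}_K$, a strict subfield of the ray class field of conductor $c$. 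A direct analysis of which $E[n]$-points can lie in $H_c$ for varying $c$ coprime to $N$, using the Heegner hypothesis to constrain ramification at primes dividing $N$, then yields finiteness of $E(H_\infty)_{\mathrm{tors}}$ in this case as well.
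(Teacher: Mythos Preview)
The paper does not supply its own proof of this lemma; it simply cites \cite[Lemma~3.14]{Da04}. Your approach---bounding $E(H_\infty)_{\mathrm{tors}}$ by the larger group $E(K^{\mathrm{ab}})_{\mathrm{tors}}$ via the commutator subgroup of the Galois image---is essentially Ribet's finiteness theorem for torsion over abelian extensions, and is quite different from Darmon's more elementary argument, which exploits the generalized dihedral structure of $\Gal(H_c/\bbq)$ together with reduction modulo well-chosen primes in the tower $\{H_c\}$ and works uniformly without any CM/non-CM dichotomy.

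There is, however, a genuine gap in your case division. You assume first that $E$ has no CM \emph{by an order in $K$} and then invoke Serre's open image theorem to conclude that $\rho_p(G_K)$ is open in $\GL_2(\bbz_p)$. But Serre's theorem requires $E$ to have no CM over $\overline{\bbq}$ at all. If $E$ has CM by an order in some imaginary quadratic field $F\neq K$ (exactly the situation arising in Lemma~\ref{quadratic_field_cm} of this paper), then $\rho_p(G_K)$ lies in the normalizer of a Cartan subgroup and is \emph{never} open in $\GL_2(\bbz_p)$; your argument as written does not cover this. The repair is not hard---since $K\neq F$, the image of $G_K$ meets both cosets of the Cartan in its normalizer, so the commutator is open in the Cartan, which already has no nonzero fixed vector on $T_p(E)$---but it is a separate argument you have omitted. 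Conversely, the case you single out for special treatment, CM by an order in $K$, is in fact vacuous under the Heegner hypothesis: if $E$ has CM by an order of $K$ then $K\subset\bbq(E[n])$ for $n\ge 3$, so good reduction at a prime $p$ forces $p$ to be unramified in $K$; hence every prime ramified in $K$ divides $N$ yet fails to split in $K$, contradicting Hypothesis~\ref{Heegner_hypothesis}. Thus the ring-class versus ray-class comparison you sketch is unnecessary, while the case you passed over is the one that actually requires an argument.
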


We denote the point $P_1$ by $P_H$. Also, for intermediate fields $H/L/K$, we let 
\[P_L := \Tr_{H/L}(P_H).\]

\begin{lemma}\cite[Theorem~3.20]{Da04}\label{gross_zagier}
    Let $\langle\:,\:\rangle$ denote the canonical N\'{e}ron-Tate height on $E(K)$ extended by linearity to a Hermitian pairing on $E(K) \otimes \bbc$. Then, we have 
    \[\langle P_K,P_K \rangle \doteq L'(E/K,1),\]
    where $\doteq$ denotes equality up to a non-zero fudge factor, which can be made explicit. 
    
    In particular, $P_K$ is non-torsion if and only if $L'(E/K,1) \neq 0$.
\end{lemma}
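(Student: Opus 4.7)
The lemma is the classical Gross--Zagier formula, so the plan is to reproduce the main strategy of its proof. First, I would factor
\[
L(E/K,s) \;=\; L(E/\bbq,s)\,L(E^K/\bbq,s),
\]
where $E^K$ denotes the quadratic twist of $E$ by the Dirichlet character associated to $K/\bbq$. Writing $f\in S_2(\Gamma_0(N))$ for the weight-two newform attached to $E$ by modularity and $\theta_K$ for the weight-one theta series of $\mathcal{O}_K$, this product coincides, up to a finite Euler factor, with the Rankin--Selberg convolution $L(f\otimes\theta_K,s)$. Hypothesis~\ref{Heegner_hypothesis} forces the global root number of $L(E/K,s)$ to be $-1$, so $L(E/K,1)=0$ and the relevant quantity is indeed the central derivative $L'(E/K,1)$.

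Second, I would express $L(f\otimes\theta_K,s)$ as a Petersson inner product of $f$ against the product of $\theta_K$ with a real-analytic Eisenstein series $E(z,s)$, and then differentiate at the critical point. Unfolding yields an explicit Fourier expansion of $L'(f\otimes\theta_K,1)$ with coefficients indexed by positive integers $m$. On the geometric side, I would use the modular parametrization $\phi:X_0(N)\to E$ to realize $P_K$ as $\phi(y_K)$ for a Heegner divisor $y_K$ of discriminant $\mathrm{disc}(K)$. Because $\phi$ is an eigenmap for every Hecke operator $T_m$ with eigenvalue $a_m$, the height $\langle P_K,P_K\rangle$ can be rewritten as a weighted generating series $\sum_m a_m\langle T_m y_K,y_K\rangle$ that is to be matched term-by-term against the Fourier expansion on the analytic side.

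The heart of the proof is then decomposing each global height $\langle T_m y_K,y_K\rangle$ into a sum of local N\'eron symbols $\sum_v \langle T_m y_K,y_K\rangle_v$ and computing them place by place: archimedean contributions arise from Green's functions on the upper half-plane, while non-archimedean contributions come from arithmetic intersection multiplicities on an integral model of $X_0(N)$, evaluated via the deformation theory of CM elliptic curves at supersingular primes. The main obstacle is precisely this place-by-place matching, particularly at finite places: one must regularize self-intersections through Hecke correspondences and reconcile the resulting intersection numbers with the Fourier coefficients of the derivative Eisenstein series, which is a delicate analytic and combinatorial calculation. Once every local contribution is matched, one obtains $\langle P_K,P_K\rangle = c\cdot L'(E/K,1)$ for an explicit nonzero constant $c$, and the ``in particular'' statement follows immediately from positive-definiteness of the N\'eron--Tate pairing on $E(K)/E(K)_{\mathrm{tors}}$.
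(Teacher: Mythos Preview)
The paper does not prove this lemma at all: it is stated as a direct citation of \cite[Theorem~3.20]{Da04}, which in turn records the Gross--Zagier formula. No argument is given or needed in the paper beyond the reference.

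Your sketch is a faithful outline of the original Gross--Zagier strategy (Rankin--Selberg unfolding on the analytic side, local height decomposition and intersection theory on the geometric side), and nothing in it is wrong as a high-level summary. But relative to what the paper actually does, you are reproducing a deep theorem that the authors treat as a black box; for the purposes of this paper the correct ``proof'' is simply to invoke the cited reference.
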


Finally, we need the following lemma that is used in proving the infinitude of the  rank of an elliptic curve over an infinite field extension.

\begin{lemma} \cite[Lemma~2.5]{Im07}\label{infinite_rank}
    Let $E/K$ be an elliptic curve over a number field $K$, $L$ be an (infinite) Galois extension of $K$, and let $\{P_n\}$ be an infinite sequence of points in $E(L)$. Denote by $\mathcal S$ the subgroup of $E(L)$ generated by the points $P_n$. Suppose the following conditions:
    \begin{enumerate}[\normalfont (i)]
        \item The torsion subgroup of $E(L)$ is finite.
        \item $\mathcal S$ is not finitely generated.
    \end{enumerate}
    Then, we have  that  $$\dim \mathcal S \otimes \bbq = \dim E(L) \otimes \bbq = \infty.$$
\end{lemma}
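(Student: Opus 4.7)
The plan is to prove the contrapositive: assuming the torsion hypothesis (i), if $\dim \mathcal{S}\otimes \bbq < \infty$, then $\mathcal{S}$ is already finitely generated, which contradicts~(ii). Once $\dim \mathcal{S}\otimes \bbq = \infty$ is established, the equality $\dim E(L)\otimes \bbq = \infty$ is automatic from $\mathcal{S}\subseteq E(L)$.

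First I would descend to a number field. Suppose $r := \dim \mathcal{S}\otimes \bbq < \infty$ and pick $Q_1,\ldots,Q_r \in \mathcal{S}$ forming a $\bbq$-basis of $\mathcal{S}\otimes\bbq$. Set $T := E(L)_{\mathrm{tors}}$, which is finite by~(i), and let $L_1 := K(Q_1,\ldots,Q_r, T)$. Then $L_1/K$ is a finite extension, so $L_1$ is a number field. For any $P \in \mathcal{S}$, the $\bbq$-linear dependence in $\mathcal{S}\otimes\bbq$ produces an integer $m_P \geq 1$ and integers $a_1,\ldots,a_r$ with $m_P P - \sum_i a_i Q_i \in T$; in particular $m_P P \in E(L_1)$.

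The key step is to exhibit a single integer $N$, depending only on $|T|$, with $N\mathcal{S} \subseteq E(L_1)$. For $\sigma \in G := \Gal(L/L_1)$, the identity $m_P\,\sigma(P) = \sigma(m_P P) = m_P P$ forces $\sigma(P) - P \in E(L)[m_P] \subseteq T \subseteq E(L_1)$, so the $G$-orbit of $P$ lies in the coset $P + T$ and $d_P := [L_1(P) : L_1] \leq |T|$. Applying the Galois trace yields
\[
d_P \cdot P = \Tr_{L_1(P)/L_1}(P) - \sum_{\sigma \in \Gal(L_1(P)/L_1)} \bigl(\sigma(P) - P\bigr),
\]
whose right-hand side lies in $E(L_1)$, so $d_P \cdot P \in E(L_1)$. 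Taking $N := |T|!$ (divisible by every $d_P$) gives $NP \in E(L_1)$ uniformly in $P$, i.e.\ $N\mathcal{S} \subseteq E(L_1)$.

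Mordell--Weil then closes the argument: $E(L_1)$ is finitely generated, hence so is its subgroup $N\mathcal{S}$. The short exact sequence
\[
0 \longrightarrow \mathcal{S}[N] \longrightarrow \mathcal{S} \xrightarrow{[N]} N\mathcal{S} \longrightarrow 0
\]
has finite left term (since $\mathcal{S}[N] \subseteq E(L)[N] \subseteq T$), so $\mathcal{S}$ itself is finitely generated, contradicting~(ii). The main obstacle is the uniformity in the preceding paragraph --- producing one integer $N$ that works for every $P \in \mathcal{S}$ simultaneously --- and this is precisely where hypothesis~(i) does its work, by converting the $P$-dependent bound $m_P$ coming from $\bbq$-linear dependence into the uniform orbit bound $d_P \leq |T|$.
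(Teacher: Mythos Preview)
The paper does not supply its own proof of this lemma; it is quoted verbatim from \cite[Lemma~2.5]{Im07} and used as a black box, so there is nothing in the present paper to compare your argument against.

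That said, your proof is correct and is essentially the standard argument one finds for this type of statement. A couple of minor remarks. First, you implicitly use that $L_1(P)/L_1$ is Galois when you write $\Gal(L_1(P)/L_1)$ and compute the trace; this is justified because $L/L_1$ is Galois (as $L/K$ is Galois and $K\subseteq L_1\subseteq L$) and each $\sigma\in\Gal(L/L_1)$ sends $P$ to $P+t_\sigma$ with $t_\sigma\in T\subseteq E(L_1)$, so $\sigma(P)\in E(L_1(P))$ and hence $L_1(P)$ is stable under $\Gal(L/L_1)$. Second, the detour through the trace is not really needed: once you know $\sigma(P)-P\in T$ for every $\sigma$, multiplying by $|T|$ kills these differences, so $\sigma(|T|P)=|T|P$ for all $\sigma$ and thus $|T|P\in E(L_1)$ directly. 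This gives the uniform $N=|T|$ in one line, without the orbit-size bound or $N=|T|!$. Either way, the conclusion via Mordell--Weil for $E(L_1)$ and the short exact sequence is exactly right.
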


\section{Construction of infinite sequence of certain primes for Theorem~\ref{main_theorem}}\label{construction_primes}

Let $E/\bbq$ be an elliptic curve over $\bbq$ of conductor $N$. For a positive integer $m$, consider the natural representation
\begin{equation}\label{phi_m}
    \phi_m:\Gal(\bbq(E[m])/\bbq)) \rightarrow \GL_2(\bbz/m\bbz),
\end{equation}
where $\bbq(E[m])$ is an $m$-division field of $E$, i.e., the field of definition of all $m$-torsion points of $E$, recalling that $E[m] \cong \bbz/m\bbz \times \bbz/m\bbz$. By composing with the projection $G_\bbq \rightarrow \Gal(\bbq(E[m])/\bbq))$, we obtain the natural representation
\begin{equation}\label{phi_m_Gq}
    \phi_m:G_\bbq \rightarrow \GL_2(\bbz/m\bbz),
\end{equation}
which we also denote by $\phi_m$. Note that the image of $\phi_m$ given in~\eqref{phi_m} is same as that of~\eqref{phi_m_Gq}.

First, suppose $E$ does not have complex multiplication. Then by Serre's open image theorem \cite{Se72}, there exists an integer $M$ such that $\phi_m$ is surjective whenever $(m,M)=1$.

\begin{lemma}\label{quadratic_field_non_cm}
    Let $E/\bbq$ be an elliptic curve over $\bbq$ of conductor $N$ without complex multiplication. Assume that $E/\bbq$ has analytic rank at most~$1$. Let $M$ be the constant such that $\phi_m$ given in~\eqref{phi_m} is surjective whenever $(m,M)=1$. Then, there exists an imaginary quadratic field $K$ with discriminant $d_K$ satisfying the following properties:
    \begin{enumerate}[\normalfont (i)]
        \item $d_K \equiv 1 \pmod{4}$.
        \item $(d_K,NM)=1$.
        \item $(E,K)$ satisfies the Heegner hypothesis.
        \item $L'(E/K,1) \neq 0$.
    \end{enumerate}
\end{lemma}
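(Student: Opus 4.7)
The plan is to first encode conditions (i)--(iii) as congruence conditions on $d_K$ modulo $4NM$, producing infinitely many candidate discriminants, and then to secure (iv) by invoking known non-vanishing theorems for the $L$-function of the quadratic twist $E^K$.

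First, I would set up the congruence conditions. Condition (i), $d_K \equiv 1 \pmod{4}$, is itself such a condition; (ii), $(d_K, NM) = 1$, is equivalent to $d_K$ being a unit modulo every prime dividing $NM$; and (iii), the Heegner hypothesis requiring every prime $p \mid N$ to split in $K/\bbq$, reduces to $\left(\frac{d_K}{p}\right) = 1$ for odd $p \mid N$ together with a condition modulo $8$ if $2 \mid N$. Collecting these yields a non-empty union $\mathcal R$ of residue classes modulo $4NM$, and a standard argument (e.g.\ taking $d_K = -p$ for primes $p$ lying in an appropriate arithmetic progression via Dirichlet's theorem) shows that $\mathcal R$ contains infinitely many negative fundamental discriminants.

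Second, I would use the factorization $L(E/K, s) = L(E, s)\,L(E^K, s)$, where $E^K$ denotes the quadratic twist of $E$ by the Kronecker character of $K$, together with the Heegner hypothesis to reduce (iv) to a non-vanishing statement about $E^K$. Since (iii) forces the global root number $w(E/K) = -1$, in particular $L(E/K, 1) = 0$ automatically, and $w(E^K) = -w(E)$. If $r_{\mathrm{an}}(E) = 0$, then $w(E) = +1$, so $w(E^K) = -1$ forces $L(E^K, 1) = 0$; differentiating the product at $s=1$ and using $L(E, 1) \neq 0$ shows that (iv) is equivalent to $L'(E^K, 1) \neq 0$. If $r_{\mathrm{an}}(E) = 1$, then $L(E, 1) = 0$ and $w(E^K) = +1$, and the same differentiation shows that (iv) is equivalent to $L(E^K, 1) \neq 0$.

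Third, I would invoke non-vanishing theorems for quadratic twists within the prescribed congruence class $\mathcal R$: results in the style of Waldspurger and Friedberg--Hoffstein (or Ono--Skinner) handle the value case $L(E^K, 1) \neq 0$, while results of Bump--Friedberg--Hoffstein and Murty--Murty handle the derivative case $L'(E^K, 1) \neq 0$. The main obstacle is precisely this last step: ensuring that such non-vanishing persists when one restricts the twist discriminant to the fixed residue class $\mathcal R$ modulo $4NM$. The restriction to a congruence class is standard for value non-vanishing via the Shimura correspondence with half-integral weight modular forms, but for derivative non-vanishing (the $r_{\mathrm{an}}(E) = 0$ case) it requires a careful variant of the Murty--Murty argument. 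Once some $K$ in $\mathcal R$ satisfying the appropriate non-vanishing is produced, all four conditions (i)--(iv) hold simultaneously.
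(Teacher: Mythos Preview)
Your proposal is correct and follows essentially the same route as the paper: reduce (i)--(iii) to splitting conditions on the primes dividing $2NM$, factor $L(E/K,s)=L(E,s)L(E^K,s)$, split into the two root-number cases, and invoke Bump--Friedberg--Hoffstein. The only difference is organizational: the paper cites \cite{BFH90} directly, whose theorem already packages the prescribed splitting conditions together with the non-vanishing of the twisted value or derivative, so the ``main obstacle'' you flag (non-vanishing within a fixed congruence class) is in fact exactly what that reference provides and needs no separate argument.
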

\begin{proof}
    First, suppose the analytic rank of $E/\bbq$ is $0$, i.e., suppose $L(1) \neq 0$. Then the root number $\epsilon$ is 1. By \cite[Theorem~(i)]{BFH90}, we obtain an imaginary quadratic field $K$ such that all primes dividing $2NM$ split in $K$ and $L_D(s)$ has a simple zero at $s=1$, where $L_D = L_{d_K}$ is the quadratic twist of an $L$-function of $E$ over $K$.

   Next, suppose the analytic rank of $E/\bbq$ is $1$, so $L'(1) \neq 0$. Then the root number $\epsilon$ is $-1$, and  \cite[Theorem~(ii)]{BFH90} guarantees the existence of an imaginary quadratic field $K$ such that all primes dividing $2NM$ split in $K$ and $L_D(1) \neq 0$. 
    
    In both cases, the properties from~(i) to~(iii) are satisfied, and since $L(E/K,s) = L(s)L_D(s)$, we conclude $L'(E/K,1) \neq 0$.
\end{proof}

\begin{lemma}\label{p_lemma_non_cm} 
    Let $E/\bbq$ be an elliptic curve over $\bbq$ of conductor $N$ without complex multiplication. Assume that $E/\bbq$ has analytic rank at most~$1$. Let $M$ be an integer as in Lemma~\ref{quadratic_field_non_cm} and let $K$ be the imaginary quadratic field satisfying the properties from {\normalfont (i)} to {\normalfont (iv)} given in Lemma~\ref{quadratic_field_non_cm}. Fix a prime $q$ satisfying $(q,2d_KNM)=1$. Then, there exist infinitely many primes~$p$ satisfying the following properties:
    \begin{enumerate}[\normalfont (i)]
        \item $p \equiv -1 \pmod {q}$,
        \item $p$ is inert in $K$,
        \item $E$ has a good reduction at $p$, and
        \item $q \nmid a_p$ where $a_p = 1+p-\lvert E(\mathbb{F}_p) \rvert$.
    \end{enumerate}
\end{lemma}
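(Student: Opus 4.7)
The plan is to apply Chebotarev's density theorem to the Galois extension $L := \bbq(E[q], K)$ of $\bbq$, so the first step is to determine $\Gal(L/\bbq)$. Since $(q, M) = 1$, the choice of $M$ forces $\phi_q : G_\bbq \twoheadrightarrow \GL_2(\Fq)$. The remaining ingredient is that $K$ and $\bbq(E[q])$ are linearly disjoint over $\bbq$, and since $K/\bbq$ is quadratic this reduces to checking $K \not\subset \bbq(E[q])$. I would argue this by a ramification count: the rational primes ramifying in $\bbq(E[q])/\bbq$ divide $qN$ (as $E$ has good reduction away from $N$ and $q\nmid N$), while the primes ramifying in $K/\bbq$ divide $d_K$; the hypotheses $(d_K, NM) = 1$ and $(q, 2 d_K N M) = 1$ give $(d_K, qN) = 1$, so the two ramification loci are disjoint. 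This yields
\[
\Gal(L/\bbq) \;\cong\; \GL_2(\Fq) \times \Gal(K/\bbq).
\]

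Next I would translate conditions (i), (ii), (iv) into Frobenius conditions in $\Gal(L/\bbq)$. For any $p \nmid qN$, which is unramified in $L$, the standard identities
\[
\det \phi_q(\mathrm{Frob}_p) \equiv p \pmod{q}, \qquad \Tr \phi_q(\mathrm{Frob}_p) \equiv a_p \pmod{q}
\]
convert (i) to $\det \phi_q(\mathrm{Frob}_p) = -1$ and (iv) to $\Tr \phi_q(\mathrm{Frob}_p) \neq 0$, while (ii) says that $\mathrm{Frob}_p$ projects to the nontrivial element of $\Gal(K/\bbq)$. Condition (iii) merely excludes the finitely many primes dividing $N$.

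I would then introduce
\[
S \;:=\; \bigl\{(g, \sigma) \in \GL_2(\Fq) \times \Gal(K/\bbq) \,:\, \det g = -1,\ \Tr g \neq 0,\ \sigma \neq 1\bigr\},
\]
which is a conjugation-invariant subset of $\Gal(L/\bbq)$. A short elementary check confirms $S \neq \emptyset$: for $q \geq 5$ take $g = \mathrm{diag}(a, -a^{-1})$ with $a \in \Fq^*$ and $a^2 \neq 1$, while for $q = 3$ an explicit non-diagonal matrix does the job. Chebotarev's density theorem then produces infinitely many primes $p$ unramified in $L$ with $\mathrm{Frob}_p|_L \in S$, and discarding the finitely many $p \mid N$ gives the desired infinite family. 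The only mildly delicate point is the linear disjointness of $K$ and $\bbq(E[q])$, which is precisely where the coprimality hypothesis $(q, 2 d_K N M) = 1$ is used; once that is in hand, the rest is a routine Chebotarev argument.
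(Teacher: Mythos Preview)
Your proof is correct and follows essentially the same Chebotarev-plus-Serre strategy as the paper. The only difference is packaging: the paper works with the single extension $\bbq(E[m])$ for $m = q\lvert d_K\rvert$ and encodes inertness in $K$ through the congruence $\det\phi_m(\sigma_p)\equiv b\pmod{m}$ (using $K\subset\bbq(\zeta_{\lvert d_K\rvert})\subset\bbq(E[m])$ and surjectivity of $\phi_m$), whereas you split the extension as $\bbq(E[q])\cdot K$ and establish the product structure directly via disjoint ramification---both routes are equally valid, and yours invokes Serre's theorem only at level~$q$ while being more explicit about the non-emptiness check.
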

\begin{proof}
    The idea of the proof is inspired by \cite[Section~2]{Co04}. The property~(ii) is equivalent to $\left(\frac{d_K}{p}\right)=-1$, which, in turn, is equivalent to $p \equiv a \pmod{\lvert d_K \rvert}$ for some set of values~$a$ modulo $\lvert d_K \rvert$. We choose one such $a$ so that whenever $p \equiv a \pmod{\lvert d_K \rvert}$, the prime $p$ remains  inert in $K$. By the Chinese Remainder Theorem, there exists an integer $b$ such that if $p \equiv b \pmod{q\lvert d_K \rvert}$, then $p \equiv -1 \pmod {q}$ and $p \equiv a \pmod{\lvert d_K \rvert}$. Setting $m=q\lvert d_K \rvert$, we conclude that $p \equiv b \pmod{m}$ satisfies both properties~(i) and (ii).
    
    Now, consider the representation given in \eqref{phi_m},
    \[\phi_m:\Gal(\bbq(E[m])/\bbq)) \rightarrow \GL_2(\bbz/m\bbz).\]
    Then, we have:
    \[\Tr(\phi_m(\sigma_p)) \equiv a_p \pmod{m}, \quad\text{ and }\quad \det(\phi_m(\sigma_p)) \equiv p \pmod{m},\]
    where $\sigma_p$ is a Frobenius map at $p$ in $\Gal(\bbq(E[m])/\bbq))$. Thus, it suffices to find infinitely many primes $p$ satisfying 
    \[\Tr(\phi_m(\sigma_p)) \not\equiv 0 \pmod{q}, \quad\text{ and }\quad \det(\phi_m(\sigma_p)) \equiv b \pmod{m}.\]
    By the classical Chebotarev density theorem, it suffices to show that the set 
    \[\{g \in \Gal(\bbq(E[m])/\bbq))\:|\:\Tr(\phi_m(g)) \not\equiv 0 \pmod{q},\:\det(\phi_m(g)) \equiv b \pmod{m}\}\]
    is non-empty. However, this is clear by Serre's open image theorem~\cite{Se72}, since $(m,M)=1$ due to our choices of $K$ and $q$.
\end{proof}

Next, suppose $E/\bbq$ has complex multiplication by an order $\mathcal{O}$ of the imaginary quadratic field $F$. Then $E[m]$ is a free $\mathcal{O}/m\mathcal{O}$-module of rank 1, so the restriction of $\phi_m$ given in \eqref{phi_m_Gq} to $G_F$ factors through
\[\left.{\phi_m}\right|_{G_F} : G_F \rightarrow \GL_1(\mathcal{O}/m\mathcal{O}) \rightarrow \GL_2(\bbz/m\bbz).\]
Let $I_m = \phi_m(G_F)$ be the image of $G_F$ under $\phi_m$. 

Consider the case when $m=p$ where $p$ is a prime relatively prime to the discriminant $d_F$ of~$F$, and $E$ has a good reduction at $p$. Then, by \cite[Lemma~7.1]{Zy15}, the map 
\[G_F \rightarrow \GL_1(\mathcal{O}/p\mathcal{O})\] 
is surjective. Therefore, $I_p$ is equal to the image of 
\[\GL_1(\mathcal{O}/p\mathcal{O}) \rightarrow \GL_2(\bbz/p\bbz).\]

Note that 
\[\GL_1(\mathcal{O}/p\mathcal{O}) \cong (\mathcal{O}/p\mathcal{O})^* \cong (\mathbb{F}_p \times \mathbb{F}_p)^* \cong \mathbb{F}_p^* \times \mathbb{F}_p^*\]
if $p$ splits in $F$ and 
\[\GL_1(\mathcal{O}/p\mathcal{O}) \cong (\mathcal{O}/p\mathcal{O})^* \cong \mathbb{F}_{p^2}^*\]
if $p$ is inert in $F$. 

If $p$ splits in $F$, then the image of $\GL_1(\mathcal{O}/p\mathcal{O})$ in $\GL_2(\bbz/p\bbz)$ is isomorphic to $\mathbb{F}_p^* \times \mathbb{F}_p^*$. Hence, the image is a split Cartan subgroup of $\GL_2(\bbz/p\bbz)$ and is conjugate to the diagonal subgroup of $\GL_2(\bbz/p\bbz)$. In particular, the composition 
\begin{equation}\label{determinant_GL1}
    \GL_1(\mathcal{O}/p\mathcal{O}) \rightarrow \GL_2(\bbz/p\bbz) \xrightarrow{\det} (\bbz/p\bbz)^*
\end{equation}
is surjective.

If $p$ is inert in $F$, then the image of $\GL_1(\mathcal{O}/p\mathcal{O})$ in $\GL_2(\bbz/p\bbz)$ is isomorphic to~$\mathbb{F}_{p^2}^*$. Hence, the image is a non-split Cartan subgroup of $\GL_2(\bbz/p\bbz)$. Note that in this case, the composition~\eqref{determinant_GL1} is just the norm map 
$\mathbb{F}_{p^2}^* \rightarrow \mathbb{F}_p^*$, 
which is surjective.

In either case, we conclude that the composition map 
\[G_F \rightarrow \GL_1(\mathcal{O}/p\mathcal{O}) \rightarrow \GL_2(\bbz/p\bbz) \xrightarrow{\det} (\bbz/p\bbz)^*\]
is surjective, and the image $I_p$ has order $\geq (p-1)^2 = \phi(p)^2$. Thus, for any every $b \in (\bbz/p\bbz)^*$, there exist at least $p-1 = \phi(p)$ elements in $I_p$ with determinant $b$.

Now consider the general case for $m$. By Chinese Remainder Theorem, if $m$ is a square-free positive integer such that $(m,d_FN)=1$, then the composition map 
\[G_F \rightarrow \GL_1(\mathcal{O}/m\mathcal{O}) \rightarrow \GL_2(\bbz/m\bbz) \xrightarrow{\det} (\bbz/m\bbz)^*\]
is surjective and the image $I_m$ has order $\geq \phi(m)^2$. Thus, for every $b \in (\bbz/m\bbz)^*$, there are at least $\phi(m)$ elements in $I_m$ with determinant $b$. We record this as a lemma. 

\begin{lemma}\label{I_m}
    Let $E/\bbq$ be an elliptic curve of conductor $N$. Assume that $E$ have complex multiplication by an order $\mathcal{O}$ of the imaginary quadratic field $F$. Let $m$ be a square-free positive integer such that $(m,d_FN)=1$, and let $I_m:= \phi_m(G_F)$ where $\phi_m$ is given in \eqref{phi_m_Gq}. Then for every $b \in (\bbz/m\bbz)^*$, there are at least $\phi(m)$ elements in $I_m$ with determinant $b$.
\end{lemma}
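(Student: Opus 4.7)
The plan is to reduce the lemma to the prime case, which was essentially established in the discussion immediately preceding the statement, by invoking the Chinese Remainder Theorem. Since $m$ is square-free and coprime to $d_F N$, CRT gives $\mathcal{O}/m\mathcal{O} \cong \prod_{p \mid m} \mathcal{O}/p\mathcal{O}$ and $\bbz/m\bbz \cong \prod_{p \mid m} \bbz/p\bbz$, and hence compatible isomorphisms $\GL_1(\mathcal{O}/m\mathcal{O}) \cong \prod_{p \mid m} \GL_1(\mathcal{O}/p\mathcal{O})$ and $(\bbz/m\bbz)^* \cong \prod_{p \mid m} (\bbz/p\bbz)^*$. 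For each prime $p \mid m$, the preceding analysis shows that $G_F \to \GL_1(\mathcal{O}/p\mathcal{O})$ is surjective via \cite[Lemma~7.1]{Zy15}, and that the composition $I_p \hookrightarrow \GL_2(\bbz/p\bbz) \xrightarrow{\det} (\bbz/p\bbz)^*$ is surjective with every fiber of cardinality at least $p - 1 = \phi(p)$, regardless of whether $p$ splits or is inert in $F$.

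Next, I would verify that these componentwise surjections assemble into a single surjection $G_F \to \prod_{p \mid m} \GL_1(\mathcal{O}/p\mathcal{O})$, so that $I_m$ identifies with $\prod_{p \mid m} I_p$ under the CRT isomorphism. This is the step that requires the most care, since joint surjectivity is not automatic from componentwise surjectivity alone. One clean route is to observe that for distinct primes $p \nmid d_F N$, the $p$-division field $F(E[p])$ is ramified over $F$ only at primes of $F$ lying above $p$ (together with primes of bad reduction, which are fixed and coprime to $m$), so the fields $\{F(E[p]) : p \mid m\}$ are mutually linearly disjoint over $F$. Combined with the componentwise surjectivity, this yields the required joint surjection; alternatively, one can appeal directly to the idelic/Gr\"ossencharakter description of the Galois representation attached to a CM elliptic curve.

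Finally, given $b \in (\bbz/m\bbz)^*$, write $b = (b_p)_{p \mid m}$ under the CRT identification. For each $p \mid m$, pick one of the at least $\phi(p)$ elements of $I_p$ with determinant $b_p$; taking the Cartesian product produces at least $\prod_{p \mid m} \phi(p) = \phi(m)$ elements of $I_m$ whose determinant is $b$, as desired.
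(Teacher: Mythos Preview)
Your approach---reducing to the prime case via the Chinese Remainder Theorem---is exactly the paper's; the paper simply asserts the conclusion ``by Chinese Remainder Theorem'' in the discussion preceding the lemma, whereas you add welcome care in flagging that joint surjectivity of $G_F \to \prod_{p\mid m}\GL_1(\mathcal{O}/p\mathcal{O})$ does not follow formally from the componentwise statements. One minor caution: your ramification route to linear disjointness is not airtight as written, since the primes of bad reduction could in principle be a \emph{common} source of ramification across the various $F(E[p])$ and hence do not by themselves separate these fields; the Gr\"ossencharakter/idelic alternative you mention, however, closes this cleanly.
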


\begin{lemma}\label{quadratic_field_cm}
    Let $E/\bbq$ be an elliptic curve of conductor $N$. Assume that $E/\bbq$ has analytic rank at most~$1$ and that $E$ have complex multiplication by an order $\mathcal{O}$ of the imaginary quadratic field $F$. There exists an imaginary quadratic field $K$ satisfying the following properties:
    \begin{enumerate}[\normalfont (i)]
        \item $d_K \equiv 1 \pmod{4}$.
        \item $(d_K,d_FN)=1$.
        \item $(E,K)$ satisfies the Heegner hypothesis.
        \item $L'(E/K,1) \neq 0$.
    \end{enumerate}
\end{lemma}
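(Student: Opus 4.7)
The plan is to mimic the proof of Lemma~\ref{quadratic_field_non_cm} essentially verbatim, with the constant $M$ coming from Serre's open image theorem replaced by the discriminant $d_F$ of the CM field. The key input is again the Bump--Friedberg--Hoffstein theorem \cite{BFH90} on nonvanishing of quadratic twists of modular $L$-functions, applied with the auxiliary modulus $2 d_F N$ so that the resulting $K$ is compatible both with the Heegner hypothesis and with the ramification constraints (i) and (ii).

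Concretely, I would split into two cases according to the parity of the analytic rank. If the analytic rank of $E/\bbq$ is $0$, then $L(E/\bbq,1) \neq 0$ and the global root number is $\epsilon = +1$; invoking \cite[Theorem~(i)]{BFH90} produces an imaginary quadratic field $K$ such that every prime dividing $2 d_F N$ splits in $K$ and the twisted $L$-function $L_D(s) := L(E^{(d_K)}/\bbq, s)$ has a simple zero at $s=1$. If the analytic rank is $1$, then $L'(E/\bbq,1) \neq 0$ and $\epsilon = -1$; \cite[Theorem~(ii)]{BFH90} gives $K$ with the same splitting condition on primes dividing $2 d_F N$ and with $L_D(1) \neq 0$. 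In either case the factorization $L(E/K, s) = L(E/\bbq, s)\, L_D(s)$ together with the chosen vanishing/nonvanishing patterns at $s=1$ yields $L'(E/K,1) \neq 0$, verifying~(iv).

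For the remaining properties: since every prime dividing $N$ splits in $K$, the Heegner hypothesis (iii) is immediate. Since every prime dividing $d_F$ splits in $K$, it is in particular unramified, so no such prime divides $d_K$, and likewise for primes dividing $N$, giving (ii). Since $2$ splits in $K$, it is unramified, which forces $K = \bbq(\sqrt{d})$ with $d \equiv 1 \pmod 4$ squarefree; hence $d_K = d \equiv 1 \pmod 4$, proving (i).

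I do not expect any serious obstacle: the argument is a direct transcription of the non-CM case. The only subtlety worth flagging is the modularity hypothesis underlying \cite{BFH90}, which is unproblematic here since CM elliptic curves over $\bbq$ are modular (indeed, this was known long before the full modularity theorem). No use is made of the CM hypothesis in this lemma itself; it enters only in the next step, via the control on the image $I_m$ provided by Lemma~\ref{I_m}.
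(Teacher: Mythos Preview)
Your proposal is correct and matches the paper's own proof exactly: the paper simply states that the argument is identical to that of Lemma~\ref{quadratic_field_non_cm}, which is precisely the transcription you outline with $M$ replaced by $d_F$. Your added remarks verifying (i)--(iii) from the splitting condition on primes dividing $2d_FN$, and the note on modularity, are accurate elaborations of what the paper leaves implicit.
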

\begin{proof}
    The proof is identical to that of Lemma~\ref{quadratic_field_non_cm}.
\end{proof}

\begin{lemma}\label{p_lemma_cm}
     Let $E/\bbq$ be an elliptic curve of conductor $N$. Assume that $E/\bbq$ has analytic rank at most~$1$ and that $E$ have complex multiplication by an order $\mathcal{O}$ of the imaginary quadratic field $F$. Let $K$ be the imaginary quadratic field satisfying the properties (i) to (iv) in Lemma~\ref{quadratic_field_cm}. Fix a prime $q$ satisfying $(q,2d_Kd_FN)=1$, $(d_F/q)=1$, and $q>1+2\lvert d_K\rvert^4/\phi(\lvert d_K\rvert)$. Then, there exist infinitely many primes~$p$ satisfying the following properties:
    \begin{enumerate}[\normalfont (i)]
        \item $p \equiv -1 \pmod {q}$,
        \item $p$ is inert in $K$,
        \item $E$ has a good reduction at $p$, and
        \item $q \nmid a_p$ where $a_p = 1+p-\lvert E(\mathbb{F}_p) \rvert$.
    \end{enumerate}
\end{lemma}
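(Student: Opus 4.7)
The plan is to adapt the Chebotarev argument of Lemma~\ref{p_lemma_non_cm} to the CM setting. Since the mod-$m$ representation of $G_\bbq$ is no longer surjective onto $\GL_2(\bbz/m\bbz)$, I instead apply the Chebotarev density theorem to the extension $F(E[m])/\bbq$, using Lemma~\ref{I_m} to control the image $I_m = \phi_m(G_F)$ of the restriction of $\phi_m$ to $G_F$.

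Set $m = q\lvert d_K \rvert$. Since $d_K \equiv 1 \pmod 4$, the integer $\lvert d_K \rvert$ is odd and squarefree, and the hypotheses $(q, 2d_Kd_FN)=1$ together with $(d_K, d_FN)=1$ (Lemma~\ref{quadratic_field_cm}(ii)) give that $m$ is squarefree with $(m, d_FN)=1$, so Lemma~\ref{I_m} applies. Exactly as in the proof of Lemma~\ref{p_lemma_non_cm}, I translate condition~(ii) via quadratic reciprocity into a congruence $p \equiv a \pmod{\lvert d_K \rvert}$ for an explicit residue $a$, and combine it with (i) using the Chinese Remainder Theorem into a single congruence $p \equiv b \pmod m$ with $b \in (\bbz/m\bbz)^*$.

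By Lemma~\ref{I_m}, there are at least $\phi(m) = (q-1)\phi(\lvert d_K \rvert)$ elements $g \in I_m$ with $\det g \equiv b \pmod m$. I claim that at least one such $g$ additionally satisfies $\Tr g \not\equiv 0 \pmod q$. Since $I_m \subseteq \GL_1(\mathcal{O}/m\mathcal{O}) \cong (\mathcal{O}/m\mathcal{O})^*$ and $(q, \lvert d_K \rvert)=1$, the CRT decomposition $(\mathcal{O}/m\mathcal{O})^* \cong (\mathcal{O}/q\mathcal{O})^* \times (\mathcal{O}/\lvert d_K \rvert\mathcal{O})^*$ shows that the trace-mod-$q$ condition constrains only the $q$-component $g_q$. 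The hypothesis $(d_F/q)=1$ forces $q$ to split in $F$, hence $(\mathcal{O}/q\mathcal{O})^* \cong \mathbb{F}_q^* \times \mathbb{F}_q^*$ with trace $\alpha_1+\alpha_2$ and norm $\alpha_1\alpha_2$; solving $\alpha_1\alpha_2 \equiv -1$ and $\alpha_1+\alpha_2 \equiv 0 \pmod q$ yields only $\alpha_1 = \pm 1$, so exactly two $g_q$ are ``bad''. A crude bound on the $\lvert d_K \rvert$-component then gives at most $2\lvert d_K \rvert^4$ bad elements in $I_m$ in total, and the hypothesis $q > 1+2\lvert d_K\rvert^4/\phi(\lvert d_K\rvert)$ is precisely what ensures $\phi(m) = (q-1)\phi(\lvert d_K \rvert)$ strictly exceeds this count. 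Hence a ``good'' $g \in I_m$ with $\det g \equiv b \pmod m$ and $\Tr g \not\equiv 0 \pmod q$ exists.

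Finally, I apply Chebotarev to $F(E[m])/\bbq$. The subgroup $\Gal(F(E[m])/F)$, identified with $I_m$ via $\phi_m|_{G_F}$, is normal of index $2$, so the conjugacy class of $g$ in $\Gal(F(E[m])/\bbq)$ lies entirely inside it. Chebotarev then produces infinitely many primes $p$, unramified in $F(E[m])$ (in particular $p \nmid Nm$, so (iii) holds), whose Frobenius class equals that of $g$. Such $p$ necessarily split in $F$, and for a prime $\mathfrak{p}$ of $F$ above $p$ the Frobenius $\sigma_\mathfrak{p}$ acts on $E[m]$ with characteristic polynomial $T^2 - a_p T + p \pmod m$. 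The identities $\det g \equiv b \pmod m$ and $\Tr g \not\equiv 0 \pmod q$ therefore translate into (i), (ii), and (iv). The main technical obstacle is the counting step: one must exploit both the lower bound on $|\{g \in I_m : \det g = b\}|$ from Lemma~\ref{I_m} and the splitting of $q$ in $F$ to control trace-zero elements, and the size hypothesis on $q$ is tailored so that the resulting pigeonhole argument succeeds.
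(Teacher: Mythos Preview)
Your proposal is correct and follows essentially the same approach as the paper: reduce (i)--(ii) to a single congruence $p\equiv b\pmod m$ with $m=q\lvert d_K\rvert$, use Lemma~\ref{I_m} to get at least $(q-1)\phi(\lvert d_K\rvert)$ elements of $I_m$ with determinant $b$, exploit $(d_F/q)=1$ to see that trace-zero, determinant $-1$ elements of $I_q$ number exactly two, bound the bad fibers crudely by $2\lvert d_K\rvert^4$, and conclude by the size hypothesis on $q$ together with Chebotarev. The only cosmetic difference is that you apply Chebotarev to $F(E[m])/\bbq$ (tracking that the conjugacy class of $g\in I_m$ stays inside the index-two normal subgroup, so the resulting primes split in $F$ and $\sigma_{\mathfrak p}$ has the right characteristic polynomial), whereas the paper applies it directly to $\bbq(E[m])/\bbq$ and simply locates the desired element inside the subgroup $I_m$ of $\phi_m(G_\bbq)$; the two formulations are equivalent.
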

\begin{proof}
    The condition (ii) is equivalent to $\left(\frac{d_K}{p}\right)=-1$, which, in turn, is equivalent to $p \equiv a \pmod{\lvert d_K \rvert}$ for some set of values $a$ modulo $\lvert d_K \rvert$. We choose one such $a$ so that whenever $p \equiv a \pmod{\lvert d_K \rvert}$, the prime $p$ remains  inert in $K$. By the Chinese Remainder Theorem, there exists an integer $b$ such that if $p \equiv b \pmod{q\lvert d_K \rvert}$, then $p \equiv -1 \pmod {q}$ and $p \equiv a \pmod{\lvert d_K \rvert}$. Setting $m=q\lvert d_K \rvert$, we conclude that $p \equiv b \pmod{m}$ satisfies both conditions (i) and (ii).
    
    Now, consider the representation given in \eqref{phi_m},
    \[\phi_m:\Gal(\bbq(E[m])/\bbq)) \rightarrow \GL_2(\bbz/m\bbz).\]
    Then, we have:
    \[\Tr(\phi_m(\sigma_p)) \equiv a_p \pmod{m}, \quad\text{ and }\quad \det(\phi_m(\sigma_p)) \equiv p \pmod{m},\]
    where $\sigma_p$ is a Frobenius at $p$ in $\Gal(\bbq(E[m])/\bbq))$. Thus it suffices to find infinitely many primes $p$ satisfying 
    \[\Tr(\phi_m(\sigma_p)) \not\equiv 0 \pmod{q},\quad \det(\phi_m(\sigma_p)) \equiv b \pmod{m}.\]
    By the Chebotarev density theorem, it suffices to show that the set 
    \[\{g \in \Gal(\bbq(E[m])/\bbq))\:|\:\Tr(\phi_m(g)) \not\equiv 0 \pmod{q},\:\det(\phi_m(g)) \equiv b \pmod{m}\}\]
    is non-empty. Thus, the problem reduces to proving that the image of $\phi_m$ contains an element $A$ satisfying 
    \[\Tr(A) \not\equiv 0 \pmod{q},\quad \text{ and }\quad \det(A) \equiv b \pmod{m}.\]
    We will find this element $A$ in the image $I_m := \phi_m(G_F)$.

    Assume that $A\in I_m$ satisfies  $\det(A) \equiv b \pmod{m}$ and $\Tr(A) \equiv 0 \pmod{q}$. Since reducing modulo~$q$ transforms $I_m$ into $I_q$ and $b$ into $-1$,  we obtain an element $A$ mod $q$ in $I_q$ satisfying $\det(A) \equiv -1 \pmod{q}$ and $\Tr(A) \equiv 0 \pmod{q}$. Given that $(d_F/q)=1$, the image $I_q$ is conjugate to the diagonal subgroup of $\GL_2(\bbz/q\bbz)$. By an appropriate change of basis, we may assume that $I_q$ itself is the diagonal subgroup of $\GL_2(\bbz/q\bbz)$. Consequently, $A$ mod $q$ must take one of the following forms:
    \begin{equation*}
        \begin{bmatrix}
            1 & 0 \\
            0 & -1
        \end{bmatrix}
        \quad \text{or} \quad 
        \begin{bmatrix}
            -1 & 0 \\
            0 & 1
        \end{bmatrix}.
    \end{equation*}
   This means that that the number of elements $A$ in $I_m$ satisfying $\det(A) \equiv b \pmod{m}$ and $\Tr(A) \equiv 0 \pmod{q}$ is at most $2\lvert d_K\rvert^4$.
   
   On the other hand, by Lemma~\ref{I_m}, the number of $A$ in $I_m$ satisfying $\det(A) \equiv b \pmod{m}$ is at least $ \phi(m) = (q-1)\phi(\lvert d_K\rvert)$. By our choice of $q$ such that $q>1+2\lvert d_K\rvert^4/\phi(\lvert d_K\rvert)$, we conclude that there must exist an element  $A\in I_m$ such that  $\det(A) \equiv b \pmod{m}$ and $\Tr(A) \not\equiv 0 \pmod{q}$. This completes the proof.
\end{proof}

\section{Proof of Theorem~\ref{main_theorem}}\label{proof}

In this section, we prove Theorem~\ref{main_theorem}. We begin by fixing an elliptic curve $E/\bbq$ over~$\bbq$ with analytic rank at most~$ 1$, and a topologically finitely generated subgroup $G$ of $G_\bbq$. Let $N$ denote  the conductor of $E$. 

If $E$ does not have complex multiplication, then we fix an imaginary quadratic field $K$ satisfying the properties (i)--(iv) given in Lemma~\ref{quadratic_field_non_cm}, and a prime $q$ given in Lemma~\ref{p_lemma_non_cm}, as well as  an infinite sequence of distinct primes $\{p_n\}$ satisfying the properties (i)--(iv)  in Lemma~\ref{p_lemma_non_cm}. 
If $E$ has complex multiplication, then we fix an imaginary quadratic field $K$ satisfying the properties (i)--(iv) given in Lemma~\ref{quadratic_field_cm} and  a prime $q$ given in Lemma~\ref{p_lemma_cm}, as well as an infinite sequence of distinct primes $\{p_n\}$ satisfying the properties (i)--(iv) in Lemma~\ref{p_lemma_cm}.

Let $H$ be the Hilbert class field of $K$, and let $H_c$ be the ring class field of $K$ of conductor~$c$. By \cite[Theorem~3.13]{Da04}, there exists a non-trivial Heegner system $\{P_n\}_{(n,N)=1}$ attached to $(E,K)$. 

We prove the theorem analyzing two cases: when $G$ fixes $K$, and when $G$ does not fix $K$.

\subsection{When $G$ fixes $K$}\label{g_fixes_k}

If $G$ fixes $K$, then $G$ is contained in $\Gal(\overline{\bbq}/K)$. Let $G_0 := G \cap \Gal(\overline{\bbq}/H)$. Then $G_0$ is an open normal subgroup of $G$ of finite index, and $G/G_0$ embeds into $\Gal(H/K)$. As an open subgroup of topologically finitely generated profinite group, $G_0$ is topologically finitely generated. Let $G_0$ be topologically generated by $r$ elements. 

For each integer $n \geq r$, let $\tilde{H_n}$ be the fixed field of 
\[C_{\frac{p_1+1}{q}} \times \cdots \times C_{\frac{p_n+1}{q}} \subseteq C_{p_1+1} \times \cdots \times C_{p_n+1}.\]
By Lemma~\ref{Galois_group_structure}, we identify the Galois group
\[\Gal(H_{p_1\cdots p_n}/H) \cong C_{p_1+1} \times \cdots \times C_{p_n+1}.\]
Then $\tilde{H_n}$ is Galois over $H$, with Galois group 
\[\Gal(\tilde{H_n}/H) \cong C_q \times \cdots \times C_q  = C_q^n.\]

Let $H_n^* = (\tilde{H_n})^{G_0}$ be the fixed field of the image of $G_0$ in $\Gal(\tilde{H_n}/H)$. Since $G_0$ is topologically generated by $r$ elements, its image in $\Gal(\tilde{H_n}/H)$ is also generated by $r$ elements. By Lemma~\ref{index_non_cm} given below, the index of the image of $G_0$ in $\Gal(\tilde{H_n}/H)$ is at least $ q^{n-r}$. Consequently, we obtain $$[H_n^*:H] \geq q^{n-r}.$$

\begin{lemma}\label{index_non_cm}
    Let $q$ be a prime and $G \leq C_q^n$ be a subgroup. Let $G$ be generated by $r \leq n$ elements. Then $[C_q^n:G] \geq q^{n-r}$.
\end{lemma}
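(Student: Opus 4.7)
The plan is to exploit the fact that $C_q^n$ has exponent $q$, so it carries a canonical $\mathbb{F}_q$-vector space structure in which every subgroup is automatically a subspace. Under this identification, the whole statement reduces to elementary linear algebra.

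Concretely, I would first observe that since every element of $C_q^n$ is annihilated by $q$, the group $C_q^n$ is naturally an $\mathbb{F}_q$-module, i.e.\ an $n$-dimensional $\mathbb{F}_q$-vector space. For any $\mathbb{Z}$-subgroup $G \leq C_q^n$, and any $g \in G$ and $a \in \mathbb{F}_q$, the scalar $a \cdot g$ is just the $a$-fold sum $g + \cdots + g$, which lies in $G$. Hence $G$ is automatically an $\mathbb{F}_q$-subspace of $C_q^n$.

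Next, I would note that if $G$ is generated as a group by $r$ elements $g_1, \ldots, g_r$, then every element of $G$ is a $\mathbb{Z}$-linear combination of the $g_i$, and since each $g_i$ has order dividing $q$, this $\mathbb{Z}$-linear combination is the same as an $\mathbb{F}_q$-linear combination. Therefore $g_1, \ldots, g_r$ span $G$ as an $\mathbb{F}_q$-vector space, so $\dim_{\mathbb{F}_q} G \leq r$. It follows that $|G| \leq q^r$, and consequently
\[
[C_q^n : G] \;=\; \frac{q^n}{|G|} \;\geq\; q^{n-r},
\]
which is exactly the claim.

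There is essentially no obstacle here: the only point one has to be slightly careful about is the automatic passage from a group-theoretic generating set to a vector-space spanning set, which is immediate from the exponent-$q$ property. No estimate on $r \leq n$ is needed for the inequality itself, though the hypothesis ensures the exponent $n - r$ is non-negative.
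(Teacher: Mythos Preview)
Your proof is correct and follows essentially the same approach as the paper: both arguments use that $C_q^n$ is an $\mathbb{F}_q$-vector space in which any subgroup generated by $r$ elements has order at most $q^r$. The paper phrases this slightly more tersely by noting that $G \cong C_q^m$ where $m$ is the minimal number of generators, hence $m \leq r$ and $|G| = q^m \leq q^r$.
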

\begin{proof}
    Let $m$ be the minimal number of generators of $G$. Then $G$ is isomorphic to $C_q^m$. Thus, we have  $\lvert G \rvert = q^m \leq q^r$ and $[C_q^n:G] \geq q^{n-r}$.
\end{proof}

In summary, we have towers of fields $\{\tilde{H_n}\}_{n\geq r}$ and $\{H_n^*\}_{n \geq r}$ satisfying $$H_n^* \subseteq \tilde{H_n}, \quad [\tilde{H_n}:H] = q^n, \quad \text{ and } \quad [H_n^*:H] \geq q^{n-r}.$$

Let $\{P_n\}_{(n,N)=1}$ be a fixed non-trivial Heegner system. For each integer $n \geq r$, let $P_n$ be the Heegner point $P_{p_1\ldots p_n}$, and let 
\[P_n^* = \Tr_{H_{p_1 \cdots p_{n}}/H_n^*}(P_n).\]
Then, $P_n^*$ are fixed by $G_0$.

Recall that $G/G_0$ embeds into $\Gal(H/K)$. Then, its image is of the form $\Gal(H/L)$ for some intermediate field $L$ between $K$ and $H$. This means that representatives $\sigma_1,\ldots,\sigma_s$ of $G/G_0$ consist of the extensions of elements of $\Gal(H/L)$. For each integer $n \geq r$, let 
\[\Tr_{G/G_0}(P_n^*) := \sum_{i=1}^s \sigma_i(P_n^*).\]
Then, all $\Tr_{G/G_0}(P_n^*)$ are fixed by $G$.

 Suppose $\{\Tr_{G/G_0}(P_i^*)\}_{i \geq r}$ generate a finitely generated group. Then there exists some integer $m$ such that all $\Tr_{G/G_0}(P_i^*)$ are defined over $H_m^*$. Consequently,  their further trance $\Tr_{H_m^*/H}(\Tr_{G/G_0}(P_i^*))$ are all defined over $H$. If $n \geq r$, then  by Lemma~\ref{heegner_system_trace}, we obtain
\[\Tr_{H_{p_1\cdots p_n}/H}(P_n) = a_{p_1}\cdots a_{p_n}P_H.\]
Moreover, we have
\begin{align*}
    \Tr_{H_{p_1\cdots p_n}/H}(P_n) = \Tr_{H_n^*/H}(P_n^*).
\end{align*}
Thus, it follows that
\[\Tr_{H_n^*/H}(P_n^*) = a_{p_1}\cdots a_{p_n}P_H,\quad \text{ for } n \geq r.\]
By taking the trace $\Tr_{G/G_0}$ and recalling that $\Gal(H_n^*/K)$ is abelian, we obtain
\begin{equation}\label{trace_relation}
    \Tr_{H_n^*/H}(\Tr_{G/G_0}(P_n^*)) = a_{p_1} \cdots a_{p_n}\Tr_{H/L}(P_H).
\end{equation}
The right-hand side of \eqref{trace_relation} is just $a_{p_1} \cdots a_{p_n}P_L$. On the other hand, if $n \geq m$, then the left-hand side of \eqref{trace_relation} satisfies that 
\begin{align*}
    \Tr_{H_n^*/H}(\Tr_{G/G_0}(P_n^*)) &= \Tr_{H_m^*/H}\Tr_{H_n^*/H_m^*}(\Tr_{G/G_0}(P_n^*)) \\
    &= [H_n^*:H_m^*]\Tr_{H_m^*/H}(\Tr_{G/G_0}(P_n^*)).
\end{align*}
Thus, we obtain
\begin{equation}\label{relation}
    [H_n^*:H_m^*]\Tr_{H_m^*/H}(\Tr_{G/G_0}(P_n^*)) = a_{p_1} \cdots a_{p_n}P_L,\quad \text{ for } n \geq m.
\end{equation}

By the Mordell-Weil theorem, $E(H)$ is finitely generated, say by $Q_1,\ldots,Q_k$ modulo torsion. Let 
\[\Tr_{H_m^*/H}(\Tr_{G/G_0}(P_i^*)) \equiv \sum_{j=1}^k c_{i,j}Q_j\text{ modulo torsion},\quad \text{ for } i > r, \]
and 
\[P_L \equiv \sum_{j=1}^k c_{j}Q_j\text{ modulo torsion},\]
where $c_{i,j}$ and $c_j$ are all integers. By Lemma~\ref{gross_zagier} and Lemma~\ref{quadratic_field_non_cm}, $P_K$ is non-torsion. Since 
\[P_K = \Tr_{L/K}(P_L),\]
it follows that $P_L$ is also non-torsion, implying that  some $c_{j}$ is nonzero. From the relation~\eqref{relation}, we obtain
\[[H_n^*:H_m^*]c_{n,j} = a_{p_1}\cdots a_{p_n}c_{j}.\]
Note that $[H_n^*:H_m^*] =q^N$ for some  $N \geq {n-m-r}$ when $n$ is sufficiently large, and  by Lemma~\ref{p_lemma_non_cm}, $q \nmid a_{p_i}$ for any $i$. This implies that $q^{n-m-r} \mid c_j$ for all sufficiently large $n$, which is impossible. 
Therefore, we conclude that $\{\Tr_{G/G_0}(P_i^*)\}_{i \geq r}$ generate a group that is not finitely generated. 

Let $H_\infty$ denote the union of all ring class fields $H_c$, where $(c,N)=1$, and let $L$ be the maximal Galois extension of $\bbq$ contained in $H_\infty^G$. By applying Lemma~\ref{infinite_rank} together with Lemma~\ref{torsion_finiteness}, this completes the proof of Theorem~\ref{main_theorem} when $G$ fixes $K$.

\

Next, we prove the theorem when $G$ does not fix $K$.

\subsection{When $G$ does not fix $K$}\label{g_does_not_fix_k} Now we remove the assumption that $G$ fixes $K$. The approach of  this subsection is inspired by~\cite{Im07}.

Let $G_1 = G \cap \Gal(\overline{\bbq}/K)$ and choose $\sigma \in G - G_1$. Note that $G_1$ is an index-two subgroup of $G$. Moreover, if we consider the images of the groups $G$ and $G_1$ under the projection $\Gal(\overline{\bbq}/\bbq) \rightarrow \Gal(H_n/\bbq)$, we have the decomposition $G = G_1 \rtimes \bbz/2\bbz$, where $\sigma$ acts as an involution. 

Now, we prove that the rank of $E((\tilde{H_n})^{G})$ over the fixed subfield under $G$ is unbounded. Suppose, for the sake of contradiction, it is bounded. Then there exists a positive integer $m$ such that the rank of $E((\tilde{H_n})^{G})$ is equal to the rank of $E((\tilde{H_m})^{G})$ whenever $n \geq m$. Therefore, $\sigma$ acts as an involution on the vector space 
\[M_n := (E((\tilde{H_n})^{G_1}) \otimes \bbq)/(E((\tilde{H_m})^{G_1}) \otimes \bbq),\quad \text{ for } n \geq m\] 
 with fixed vector space 0. Therefore, $\sigma$ acts on $M_n$ by  $-1$.

Let 
\[\rho : \Gal((\tilde{H_n})^{G_1}/\bbq) \rightarrow \Aut(M_n)\]
be the representation. Since $\Gal(H/K)$ is finite, by taking sufficiently large $m$, we may assume that $[(\tilde{H_n})^{G_1}:(\tilde{H_m})^{G_1}]$ is a power of $q$. Thus, $\Gal((\tilde{H_n})^{G_1}/(\tilde{H_m})^{G_1})$ is isomorphic to a product of cyclic $q$-groups. For each generator $\tau$ of each of these cyclic $q$-groups, $\tau^2$ acts on $M_n$ trivially, i.e., by identity, since 
\[\rho(\tau^2) = (-1)\rho(\tau)(-1)\rho(\tau) = \rho(\sigma\tau\sigma^{-1}\tau) = \rho(\tau^{-1}\tau) = \rho(1)=1.\]
Here, by abuse of notation, we denote by $\sigma$  its restriction to $(\tilde{H_n})^{G_1}$. Therefore, $\tau^2$ acts trivially on $M_n$, and hence $\tau$ acts on $M_n$ trivially, since the order of $\tau$ is a power of $q$. It follows that $\Gal((\tilde{H_n})^{G_1}/(\tilde{H_m})^{G_1})$ acts trivially on $M_n$, implying $M_n=0$. This contradicts our eariler result in Subsection~\ref{g_fixes_k}, which established that the rank of $E((\tilde{H_n})^{G_1})$ is unbounded for large $n$. Therefore, the rank of $E((\tilde{H_n})^{G})$ over the fixed subfield under $G$ is also unbounded, which completes the proof of Theorem~\ref{main_theorem}.

%\printbibliography

\end{document}